\newcommand{\excise}[1]{}%{$\star$\textsc{#1}$\star$}
\newcommand{\ve}{\boldsymbol}
\newtheorem{thm}{Theorem}
\newtheorem{lemma}[thm]{Lemma}
\newtheorem{cor}[thm]{Corollary}
\theoremstyle{definition}
\newcommand{\be}{\begin{eqnarray}}
\newcommand{\bea}{\begin{eqnarray*}}
\newcommand{\ee}{\end{eqnarray}}
\newcommand{\eea}{\end{eqnarray*}}
\newcommand{\vol}{\mathrm{vol}}
\newcommand{\R}{\mathbb R}
\newcommand{\Z}{\mathbb Z}
\renewcommand{\H}{\mathcal H}
\newcommand{\C}{\mathcal C}
\newcommand{\Q}{\mathcal Q}
\renewcommand{\L}{{\mathcal L}}
\renewcommand{\P}{{\mathcal P}}
\title[Hyperplane sections of $d$-cube]{On the volume of hyperplane sections of a $d$-cube}
\author[I. Aliev]{Iskander Aliev}
\address{Mathematics Institute, Cardiff University, Cardiff, Wales, UK}
\email{alievi@cardiff.ac.uk}
\subjclass[2010]{52A40, 52A38} 
\date{\today }
\begin{document}

\begin{abstract}
\noindent
We obtain an optimal upper bound for the normalised volume of a hyperplane section of an origin-symmetric $d$-dimensional cube.
This confirms a conjecture posed by  Imre B\'ar\'any and P\'eter Frankl.

\end{abstract}

\maketitle

\section{Statement of the results}

Let $\C^d=[-1/2,1/2]^d$ be the $d$-dimensional unit cube. Throughout this paper we assume that $d\ge 2$. For a nonzero vector ${\ve v}\in \R^{d}$
we will denote by ${\ve v}^\bot$ the hyperplane orthogonal to ${\ve v}$ and consider the section $\C^d\cap {\ve v}^{\bot}$ of the cube $\C^d$.
Let further $\|\cdot\|_1$ and $\|\cdot\|_2$ denote $\ell_1$ and $\ell_2$ norms, respectively. In this paper we will be interested in the quantity
\be\label{V_d}
V_d= \max_{{\ve v}\in \R^d} \frac{\|{\ve v}\|_1}{\|{\ve v}\|_2}\cdot \vol_{d-1}( \C^d\cap {\ve v}^{\bot})\,,
\ee
where $\vol_{d-1}(\cdot)$ stands for the $(d-1)$-volume.
Imre B\'ar\'any and P\'eter Frankl \cite{BF} conjectured that the maximum in (\ref{V_d}) is given by the vector ${\ve v}={\ve 1}_d:= (1, \ldots,1 )$. Our main result confirms this conjecture.

\begin{thm}\label{slicing_thm}
We have
\be\label{thm_V_d}
V_d = \sqrt{d}\cdot \vol_{d-1}(\C^d\cap {\ve 1}_d^{\bot})\,.
\ee
\end{thm}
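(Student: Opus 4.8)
The lower bound $V_d\ge\sqrt d\,\vol_{d-1}(\C^d\cap{\ve 1}_d^{\bot})$ is immediate by taking ${\ve v}={\ve 1}_d$, so everything is in the reverse inequality. Since the quantity in \eqref{V_d} depends only on $(|v_1|,\dots,|v_d|)$, I may assume ${\ve v}$ is a unit vector ${\ve a}$ with nonnegative entries. I would work from the classical Pólya–Ball formula
\[
\vol_{d-1}(\C^d\cap{\ve a}^{\bot})=\int_{\R}\prod_{j=1}^{d}\frac{\sin(\pi a_j\xi)}{\pi a_j\xi}\,d\xi,
\]
and from its probabilistic avatar $\frac{\|{\ve v}\|_1}{\|{\ve v}\|_2}\vol_{d-1}(\C^d\cap{\ve v}^{\bot})=\sum_{j=1}^{d}\Pr\!\big(\,\big|\sum_{k\ne j}a_kU_k\big|\le a_j/2\,\big)$, where $U_1,\dots,U_d$ are i.i.d.\ uniform on $[-1/2,1/2]$; in this form the conjectured value is transparently $d\cdot\Pr\!\big(|U_1+\dots+U_{d-1}|\le 1/2\big)$. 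A zero coordinate drops the dimension, so after recording that $n\mapsto\sqrt n\,\vol_{n-1}(\C^n\cap{\ve 1}_n^{\bot})$ is nondecreasing (a consequence of standard estimates for $\int_{\R}|\tfrac{\sin\pi t}{\pi t}|^{p}\,dt$) it suffices to treat ${\ve a}$ with all entries positive.

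\textbf{Main engine.} I would then run Ball's Hölder argument on the integral: Hölder with exponents $1/a_j^2$ and a rescaling in each factor give, whenever $\max_j a_j^2\le 1/2$,
\[
\vol_{d-1}(\C^d\cap{\ve a}^{\bot})\le\prod_{j=1}^{d}\psi\!\big(1/a_j^2\big)^{a_j^2},\qquad \psi(p):=\sqrt p\int_{\R}\Big|\frac{\sin\pi t}{\pi t}\Big|^{p}\,dt,
\]
where $\psi(p)\le\sqrt2$ is Ball's cube–slicing lemma and $\psi(n)=\vol_{n-1}(\C^n\cap{\ve 1}_n^{\bot})$ for even integers $n$. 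The complementary range $\max_j a_j^2>1/2$ (where ${\ve a}$ is close to a coordinate axis) must be treated by hand, but it behaves well because the section volume then drops well below $\sqrt2$ while $\|{\ve a}\|_1$ cannot be large. Modulo these two reductions the theorem becomes the finite-dimensional inequality
\[
\Big(\sum_{j=1}^{d}a_j\Big)\prod_{j=1}^{d}\psi\!\big(1/a_j^2\big)^{a_j^2}\ \le\ \sqrt d\,\psi(d)\qquad\Big(\textstyle\sum_j a_j^2=1,\ 0<a_j\le 1/\sqrt2\Big),
\]
with equality exactly at ${\ve a}={\ve 1}_d/\sqrt d$ — with one genuine caveat: for \emph{odd} $d$ the Hölder bound is not tight at the diagonal (signed versus absolute-value integral of $(\sin t/t)^d$), so the odd case needs an extra cancellation estimate, or one must run the whole argument in the probabilistic form, where the target is honestly $d\,f_{W_d}(0)$ with $f_{W_d}$ the density of $W_d=U_1+\dots+U_d$ and no absolute values intervene.

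\textbf{The optimisation, and where the difficulty lies.} Writing $b_j=a_j^2$ and $h(t)=t\log\psi(1/t)$ on $(0,1/2]$, the displayed inequality reads, after taking logarithms,
\[
\log\Big(\sum_j\sqrt{b_j}\Big)+\sum_j h(b_j)\ \le\ \log\sqrt d+d\,h(1/d)\qquad\Big(\textstyle\sum_j b_j=1,\ 0<b_j\le 1/2\Big).
\]
The first term is symmetric and concave in ${\ve b}$, so if $h$ were concave on $(0,1/2]$ the whole left side would be Schur-concave, hence maximal at the centre $b_j\equiv 1/d$ (feasible since $d\ge2$), and we would be done. The crux — and what I expect to be the real work — is that $h$ is \emph{not} concave on all of $(0,1/2]$: because $\psi$ attains its maximum $\sqrt2$ at $p=2$ and then decreases, $h$ develops a convex stretch as $t\uparrow 1/2$. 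So the plan is to (i) make a quantitative study of $\psi$ — refining Ball's lemma to second order — proving $h$ concave on some explicit interval $(0,t_0]$, and (ii) handle separately those ${\ve b}$ with a coordinate in $(t_0,1/2]$: there is then at most one "large" coordinate and it forces all others to be small, so one can either compare ${\ve b}$ directly with the centre or apply a smoothing that shifts mass out of the bad region without decreasing the objective. A variant endgame is to analyse the critical-point equations $f_{S^{(j)}}(a_j/2)=J({\ve a})\big(1-a_j/\|{\ve a}\|_1\big)$, $J=\int_{\R}\prod_k\tfrac{\sin(\pi a_k\xi)}{\pi a_k\xi}\,d\xi$, $S^{(j)}=\sum_{k\ne j}a_kU_k$: the diagonal satisfies them (equivalently, the identity $f_{W_{d-1}}(1/2)=\tfrac{d-1}{d}f_{W_d}(0)$), and one would then show it is their only solution with $\max_j a_j^2\le 1/2$ and that it is the global maximum.
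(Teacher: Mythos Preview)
Your proposal is a programme with several openly unfinished steps, not a proof. Two of the gaps are structural. First, for odd $d$ the H\"older route cannot close: at the diagonal your bound gives $\psi(d)=\sqrt d\int_{\R}|\tfrac{\sin\pi t}{\pi t}|^{d}\,dt$, which \emph{strictly} exceeds $\sqrt d\,\sigma_d=\vol_{d-1}(\C^d\cap{\ve 1}_d^{\bot})$, so even if the displayed optimisation inequality were proved it would not yield the theorem for odd $d$; you acknowledge this but offer no replacement argument beyond ``run the probabilistic form'', which you then also leave at the level of writing down the Euler--Lagrange equations. Second, the Schur step genuinely breaks: $h(t)=t\log\psi(1/t)$ is not concave near $t=1/2$, and your proposed repair (concave on $(0,t_0]$, ad hoc on $(t_0,1/2]$ plus ``smoothing'') contains no mechanism; nothing you wrote excludes an interior maximiser with one large and several small coordinates. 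The ``by hand'' treatment of $\max_j a_j^2>1/2$ is likewise only asserted.

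The paper bypasses all of this with a short convex-geometry argument. Work with the intersection body $I\Q^d$ of the (suitably rescaled) cube, whose radial function at a unit vector ${\ve s}$ is $\vol_{d-1}(\Q^d\cap{\ve s}^{\bot})$; Busemann's theorem says $I\Q^d$ is convex. For ${\ve v}\in\mathbb S^{d-1}_{\ge0}$ the quantity $\|{\ve v}\|_1\cdot\vol_{d-1}(\Q^d\cap{\ve v}^{\bot})$ is exactly the value of the linear functional $x_1+\cdots+x_d$ at the boundary point of $I\Q^d$ in direction ${\ve v}$, so the theorem is equivalent to the statement that $\{x_1+\cdots+x_d=\sqrt d\}$ is a supporting hyperplane of $I\Q^d$ at the diagonal boundary point ${\ve h}={\ve 1}_d/\sqrt d$. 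That, in turn, follows from convexity plus permutation symmetry: the cyclic permutations of any boundary point lie in $I\Q^d$, and their average lies on the ray $\R_{>0}{\ve 1}_d$, hence cannot overshoot ${\ve h}$. No integral formula, no H\"older, no parity split, no optimisation over the simplex.
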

It is known that
\bea \lim_{d\rightarrow\infty}\vol_{d-1}(\C^d\cap  {\ve 1}_d^{\bot})=\sqrt{\frac{6}{\pi}} \eea
(see \cite{La}, \cite{Po} and e. g.
\cite{ChLo}).  
The expression (\ref{thm_V_d}) also allows finding the exact value of $V_d$ in the following way. Let ${\ve
s}=(s_1,\ldots,s_d)\in{\mathbb S}^{d-1}$ be a unit vector. Then
\be \vol_{d-1}(\C^d\cap {\ve s}^{\bot})
=\frac{2}{\pi}\int_0^\infty\prod_{i=1}^d\frac{\sin s_i t}{s_i t}
\dd{t}\label{volume_via_sinc}\ee
(see e. g. \cite{Ba}).  Consider the {\em
sinc} integral \cite{BoBo}
\bea \sigma_d=\frac{2}{\pi}\int_0^\infty\left(\frac{\sin
t}{t}\right)^d \dd{t}\,. \eea
In view of (\ref{thm_V_d}) and (\ref{volume_via_sinc})  we have 
\be \label{central_sinc}
V_d=
\frac{2\sqrt{d}}{\pi}\int_0^\infty\left(\frac{\sin
\frac{t}{\sqrt{d}}}{\frac{t}{\sqrt{d}}}\right)^d  \dd{t}=
d\,\sigma_d\,.\ee
Further 
\bea \sigma_d=\frac{d}{2^{d-1}}\sum\limits_{0\le
r<d/2,\,r\in\Z}\frac{(-1)^r(d-2r)^{d-1}}{r!(d-r)!}\,\eea
(see e. g.
\cite{MeRo}). The sequences of numerators and denominators of $\sigma_d/2$ can be
found in \cite{Sloane}. 

Theorem \ref{slicing_thm} and (\ref{volume_via_sinc}) immediately imply the following lower bound for sinc integrals.
\begin{cor}
For any unit vector ${\ve s}=(s_1,\ldots,s_d)\in{\mathbb S}^{d-1}$
\bea \frac{2||{\ve s}||_1}{\pi d}\int_0^\infty\prod_{i=1}^d\frac{\sin s_i
t}{s_i t}\, \dd{t}\le  \sigma_d\,.\eea

\end{cor}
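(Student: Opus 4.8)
The corollary is a direct restatement of Theorem \ref{slicing_thm}, so the plan is short. I would start from the definition (\ref{V_d}) of $V_d$, which says that for every nonzero ${\ve v}\in\R^d$
\[
\frac{\|{\ve v}\|_1}{\|{\ve v}\|_2}\,\vol_{d-1}(\C^d\cap {\ve v}^\bot)\le V_d .
\]
Specializing this to a unit vector ${\ve v}={\ve s}=(s_1,\dots,s_d)\in{\mathbb S}^{d-1}$, for which $\|{\ve s}\|_2=1$, gives
\[
\|{\ve s}\|_1\,\vol_{d-1}(\C^d\cap {\ve s}^\bot)\le V_d .
\]
By Theorem \ref{slicing_thm} together with the chain of identities (\ref{central_sinc}), the right-hand side equals $d\,\sigma_d$. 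Substituting the sinc representation (\ref{volume_via_sinc}) of the section volume on the left and dividing through by $d$ yields exactly
\[
\frac{2\|{\ve s}\|_1}{\pi d}\int_0^\infty\prod_{i=1}^d\frac{\sin s_i t}{s_i t}\,\dd{t}\le\sigma_d ,
\]
which is the assertion.

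The only items needing a word of care are bookkeeping ones. If some coordinate $s_i$ vanishes, the factor $\tfrac{\sin s_i t}{s_i t}$ is to be read as its limiting value $1$; with this convention the section $\C^d\cap{\ve s}^\bot$ is an orthogonal prism whose $(d-1)$-volume is still given by (\ref{volume_via_sinc}), and the inequality is unaffected. One should also note that the maximum in (\ref{V_d}) is genuinely attained, namely at ${\ve v}={\ve 1}_d$ by Theorem \ref{slicing_thm}, so writing ``$\le V_d$'' for every ${\ve v}$ is legitimate. There is no real obstacle in the corollary itself: all of the difficulty sits in Theorem \ref{slicing_thm}, and the corollary merely rewrites the extremal inequality $\tfrac{\|{\ve v}\|_1}{\|{\ve v}\|_2}\vol_{d-1}(\C^d\cap{\ve v}^\bot)\le\sqrt d\,\vol_{d-1}(\C^d\cap{\ve 1}_d^\bot)$ in the language of sinc integrals through (\ref{volume_via_sinc}) and (\ref{central_sinc}).
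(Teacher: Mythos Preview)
Your proposal is correct and follows exactly the route indicated in the paper: the corollary is stated there as an immediate consequence of Theorem~\ref{slicing_thm} together with (\ref{volume_via_sinc}) and (\ref{central_sinc}), and your argument simply spells out that one-line deduction. The extra remarks about vanishing coordinates and attainment of the maximum are harmless clarifications but not needed for the paper's purposes.
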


It is known that $0<\sigma_{d+1}/\sigma_{d}< 1$ (see e. g. \cite[Lemma 1]{SL}). 
Theorem \ref{slicing_thm} also implies the following lower bound for  the ratio of consecutive sinc integrals. 
\begin{cor}\label{lower_sinc_ratio}
We have
\bea \frac{d}{d+1}\le \frac{\sigma_{d+1}}{ \sigma_{d}}\,.\eea \label{Borweins}
\end{cor}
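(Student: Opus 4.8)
The goal is Corollary~\ref{lower_sinc_ratio}, namely $\frac{d}{d+1}\le \frac{\sigma_{d+1}}{\sigma_d}$. The plan is to use Theorem~\ref{slicing_thm} together with the relation $V_d = d\,\sigma_d$ from (\ref{central_sinc}) to convert this into a statement purely about volumes of cube sections, and then exhibit a single cleverly chosen direction vector in dimension $d+1$ whose section has large volume relative to $\C^{d+1}\cap {\ve 1}_{d+1}^\bot$.

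First I would rewrite the desired inequality. Since $\sigma_d = V_d/d$ and $\sigma_{d+1} = V_{d+1}/(d+1)$, the claim $\frac{d}{d+1}\le \frac{\sigma_{d+1}}{\sigma_d}$ is equivalent to $\sigma_{d+1}\ge \frac{d}{d+1}\sigma_d$, i.e. $\frac{V_{d+1}}{d+1}\ge \frac{d}{d+1}\cdot\frac{V_d}{d}$, i.e. simply $V_{d+1}\ge V_d$. So it suffices to prove that the sequence $V_d$ is nondecreasing. By Theorem~\ref{slicing_thm}, $V_{d+1} = \sqrt{d+1}\cdot \vol_d(\C^{d+1}\cap {\ve 1}_{d+1}^\bot)$, but more usefully $V_{d+1}$ is a \emph{maximum} over all directions ${\ve v}\in\R^{d+1}$, so it is enough to find \emph{one} direction ${\ve v}\in\R^{d+1}$ with $\frac{\|{\ve v}\|_1}{\|{\ve v}\|_2}\vol_d(\C^{d+1}\cap {\ve v}^\bot)\ge V_d$.

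The natural candidate is to take the optimal direction ${\ve w}\in\R^d$ for $V_d$ (so ${\ve w} = {\ve 1}_d$ by Theorem~\ref{slicing_thm}, though one can argue with a general ${\ve w}$) and append a zero coordinate: ${\ve v} = ({\ve w},0)\in\R^{d+1}$. Then $\|{\ve v}\|_1 = \|{\ve w}\|_1$ and $\|{\ve v}\|_2=\|{\ve w}\|_2$, so the normalising factor is unchanged. The section $\C^{d+1}\cap {\ve v}^\bot$ is the product $(\C^d\cap{\ve w}^\bot)\times[-1/2,1/2]$, hence $\vol_d(\C^{d+1}\cap{\ve v}^\bot) = \vol_{d-1}(\C^d\cap{\ve w}^\bot)\cdot 1 = \vol_{d-1}(\C^d\cap{\ve w}^\bot)$. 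Therefore $\frac{\|{\ve v}\|_1}{\|{\ve v}\|_2}\vol_d(\C^{d+1}\cap{\ve v}^\bot) = \frac{\|{\ve w}\|_1}{\|{\ve w}\|_2}\vol_{d-1}(\C^d\cap{\ve w}^\bot) = V_d$, and maximising over directions in $\R^{d+1}$ gives $V_{d+1}\ge V_d$. Translating back through $V_d = d\,\sigma_d$ yields $\frac{\sigma_{d+1}}{\sigma_d} = \frac{V_{d+1}/(d+1)}{V_d/d}\ge \frac{d}{d+1}$, which is the corollary.

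I do not expect a serious obstacle here: the only point requiring a line of care is the identification of the section of $\C^{d+1}$ by the hyperplane $({\ve w},0)^\bot$ as a cylinder over the section of $\C^d$ by ${\ve w}^\bot$, together with the Fubini computation of its $d$-volume; and one should note that if ${\ve w}$ is allowed to have zero coordinates this is still fine since the section is still a genuine cylinder in the last coordinate. Everything else is bookkeeping with the normalisation $\|{\ve v}\|_1/\|{\ve v}\|_2$ and the substitution $\sigma_d = V_d/d$. So the proof is essentially a two-line observation once Theorem~\ref{slicing_thm} and (\ref{central_sinc}) are in hand.
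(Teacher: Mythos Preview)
Your proof is correct and follows the same route as the paper: reduce the inequality to $V_{d+1}\ge V_d$ via $V_d=d\,\sigma_d$, and then use the monotonicity of $V_d$. The only difference is that the paper simply cites \cite{BF} for $V_{d+1}\ge V_d$, whereas you supply the (standard) argument by padding the optimal direction with a zero coordinate and using $\C^{d+1}\cap({\ve w},0)^\bot=(\C^d\cap{\ve w}^\bot)\times[-1/2,1/2]$.
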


\section{Intersection body of $\C^d$}

We can
associate with each 
star body $\L$ the {\em distance function} 
$f_{\L}({\ve x})=\inf\{\lambda>0 : {\ve x}\in \lambda \L\}\,. $
The {\em intersection body}  $I{\L}$ of a star body $\L\subset\R^d$ (recall that we assume $d\ge 2$) is defined as the ${\ve 0}$--symmetric star body with
distance function 
\bea f_{I{\L}}({\ve x})=\frac{\|{\ve x}\|_2}{\vol_{d-1}(
\L\cap {\ve x}^{\bot})}\,.\eea
The Busemann theorem (see e. g. \cite{Ga}, Chapter 8) states that if
${\L}$ is ${\ve 0}$--symmetric and convex, then $I{\L}$ is a convex set.
For more details on intersection bodies we refer the reader to \cite{Ko, Lu}.

For convenience, in what follows we will work with normalised cube 
\bea
\Q^d= \frac{1}{\vol_{d-1}(\C^d\cap {\ve 1}_d^\bot)^{1/(d-1)}} \cdot \C^d\,. 
\eea
Then, in particular,
\be\label{section_volume_one}
\vol_{d-1}(\Q^d\cap {\ve 1}_d^\bot)=1\,.
\ee

\begin{lemma}\label{support}
The affine hyperplane
\bea
\H=\{{\ve x}\in\R^d: x_1+\cdots+x_d=\sqrt{d}\}
\eea
is a supporting hyperplane of $I\Q^d$. 
\end{lemma}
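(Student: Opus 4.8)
The plan is to derive the statement from Busemann's theorem together with the permutation symmetry of the cube, with no further hard input. First I would observe that since $\Q^d$ is an origin-symmetric convex body, Busemann's theorem makes $I\Q^d$ a convex body; and since $\Q^d$ is a scalar multiple of $\C^d$, it is invariant under every permutation of the coordinates. Because permutations are orthogonal transformations and $f_{I\Q^d}$ depends on $\Q^d$ only through Euclidean norms and $(d-1)$-volumes of central sections — quantities unchanged by orthogonal maps — the body $I\Q^d$ is permutation-invariant as well. I would also record, straight from the definition of the distance function and from \eqref{section_volume_one}, that
\[
f_{I\Q^d}({\ve 1}_d)=\frac{\|{\ve 1}_d\|_2}{\vol_{d-1}(\Q^d\cap{\ve 1}_d^\bot)}=\sqrt d ,
\]
so that the point $p:={\ve 1}_d/\sqrt d$ satisfies $f_{I\Q^d}(p)=1$; thus $p$ lies on $\partial I\Q^d$, and since $p_1+\cdots+p_d=\sqrt d$ it also lies on $\H$. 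This already shows $\H$ meets $I\Q^d$.

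The real content is to check that $I\Q^d$ lies in the half-space $\{x\in\R^d:x_1+\cdots+x_d\le\sqrt d\}$, i.e.\ that $\ell(x):=x_1+\cdots+x_d$ is bounded by $\sqrt d$ on $I\Q^d$. I would argue by contradiction: suppose $x^*\in I\Q^d$ with $\ell(x^*)>\sqrt d$. Since $\ell$ is fixed by every coordinate permutation $\pi$, each image $\pi x^*$ lies in $I\Q^d$ (permutation invariance) and has $\ell(\pi x^*)=\ell(x^*)$. Averaging over all $\pi$ and using convexity of $I\Q^d$, the point $\bar x=\frac{1}{d!}\sum_\pi \pi x^*$ lies in $I\Q^d$; but $\bar x$ is itself permutation-invariant, hence $\bar x=\frac{\ell(x^*)}{d}\,{\ve 1}_d$, and by positive $1$-homogeneity of $f_{I\Q^d}$,
\[
f_{I\Q^d}(\bar x)=\frac{\ell(x^*)}{d}\,f_{I\Q^d}({\ve 1}_d)=\frac{\ell(x^*)}{\sqrt d}>1 ,
\]
contradicting $\bar x\in I\Q^d$. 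Hence no such $x^*$ exists, $I\Q^d\subseteq\{x_1+\cdots+x_d\le\sqrt d\}$, and together with the first paragraph this says exactly that $\H$ is a supporting hyperplane of $I\Q^d$.

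The only genuinely nontrivial ingredient is Busemann's theorem: it is what turns the star body $I\Q^d$ into a convex one and thereby legitimises the averaging step — without convexity the average $\bar x$ need not belong to $I\Q^d$. Everything else is routine bookkeeping: continuity and positivity of $u\mapsto\vol_{d-1}(\Q^d\cap u^\bot)$ on the sphere (so $I\Q^d$ is a genuine bounded star body with ${\ve 0}$ in its interior), positive homogeneity of $f_{I\Q^d}$, and the one-line evaluation of $f_{I\Q^d}({\ve 1}_d)$ from \eqref{section_volume_one}. So I would expect essentially no obstacle beyond recognising Busemann's theorem as the correct tool, and I would keep the written proof to the short contradiction argument above.
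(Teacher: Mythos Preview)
Your proof is correct and follows essentially the same strategy as the paper: both invoke Busemann's theorem for the convexity of $I\Q^d$, exploit permutation invariance, and reach a contradiction by averaging permuted copies of a putative point with coordinate sum exceeding $\sqrt d$. The only cosmetic differences are that the paper averages over the $d$ cyclic shifts rather than all $d!$ permutations and first passes to a nearby interior point before averaging; your direct version is, if anything, slightly cleaner.
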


\begin{proof}

Let $f=f_{I\Q^d}$ denote the distance function of $I\Q^d$, so that
$
I\Q^d=\{{\ve x}\in \R^d: f({\ve x})\le 1\}\,.
$
By (\ref{section_volume_one}), for the point
\be\label{h_def}
{\ve h}:=\frac{1}{\sqrt{d}}{\ve 1}_d = \left(  \frac{1}{\sqrt{d}}, \ldots,  \frac{1}{\sqrt{d}} \right)
\ee
we have  $f({\ve h})=1$. Therefore ${\ve h}$ is on the boundary of $I\Q^d$.

Suppose, to derive a contradiction, that $\H$ is not a supporting hyperplane of $I\Q^d$. Observe that ${\ve h}\in \H\cap I\Q^d$. Hence for any $\epsilon>0$
there exists a point ${\ve p}=(p_1, \ldots, p_d)$ in the interior of $I\Q^d$ with 
\be \label{close}
\|{\ve h} - {\ve p} \|_2<\epsilon
\ee  
and $p_1+\cdots+p_d>\sqrt{d}$. 

By  (\ref{close}) we may assume that ${\ve p}\in \R^d_{>0}$. Further, as the point ${\ve p}$ is in the interior of $I\Q^d$ we may assume, for simplicity,  that the entries of ${\ve p}$ are pairwise distinct: $p_i\neq p_j$ for $i\neq j$. Consider $d$ points
\bea
\begin{array}{lllllll}
{\ve p}_1& = &(p_1,& \ldots,& p_{d-1},& p_d)\\
{\ve p}_2& = &(p_2,& \ldots,& p_{d},& p_1)\\
\vdots\\
{\ve p}_d& = &(p_d,& \ldots,& p_{d-2},& p_{d-1})\,.
\end{array}
\eea
For each $i$, the section $\Q^d\cap {\ve p}_i^{\bot}$ is the image of the section $\Q^d\cap {\ve p}_1^{\bot}$ under an orthogonal transformation
defined by a permutation matrix. Therefore ${\ve p}_i\in I\Q^d$. Set
\bea
{\ve y}= \frac{1}{d} ({\ve p}_1+\cdots+{\ve p}_d) %=\frac{\sum_{i=1}^d u_i}{d} {\ve 1}_d
= \frac{\sum_{i=1}^d p_i}{\sqrt{d}} {\ve h}\,.
\eea
By construction, ${\ve y}$ is a convex combination of the points ${\ve p}_1, \ldots, {\ve p}_d$.
Since $I\Q^d$ is convex, ${\ve y}=(y_1, \ldots, y_d)\in I\Q^d$. Further
\bea
y_1+\cdots+y_d= \sum_{i=1}^d p_i>\sqrt{d}\,.
\eea
Therefore the point ${\ve h}$ must be in the interior of $I\Q^d$. The derived contradiction completes the proof.

\end{proof}

\section{Proof of Theorem \ref{slicing_thm}}

%Let 
%\bea
%
%V'_d= \max_{{\ve v}\in \R^d} \frac{\|{\ve v}\|_1}{\|{\ve v}\|_2}\cdot \vol_{d-1}(\Q^d\cap {\ve v}^{\bot})= \frac{V_d}{\vol_{d-1}(\C^d\cap {\ve 1}_d^{\bot})}\,.
%
%\eea
%
It is sufficient to show that for any unit vector ${\ve v}\in {\mathbb S}^{d-1}$ the inequality
\be\label{unit_ineq}
\|{\ve v}\|_1\cdot \vol_{d-1}(\Q^d\cap {\ve v}^\bot)\le \sqrt{d}\cdot \vol_{d-1}(\Q^d\cap {\ve 1}_d^\bot)=\sqrt{d}
\ee
holds.

In view of symmetry of $\Q^d$ we may assume without loss of generality that ${\ve v}\in\R^d_{\ge 0}$.
%Otherwise we replace ${\ve 1}_d$ with the point $(\pm 1, \ldots, \pm 1)$ with signs corresponding to the signs of $v_i$-s.
%
Consider the plane $\P$ spanned by the vector ${\ve h}$, defined by (\ref{h_def}), and the vector ${\ve v}$  and let $\alpha$ be the angle between these two vectors
with $\cos(\alpha)= {\ve h}\cdot {\ve v}$ (Figure \ref{plane_geometry_figure}). It is not difficult to see that $\cos(\alpha) \ge 1/\sqrt{d}$ and, consequently, $\alpha<\pi/2$.

\begin{figure}[!h] 

\centering

\begin{tikzpicture}

\def\myangle{63.43};

\draw (0,0) circle (2cm);
\draw (0,0)--(2,0) node[right]{${\ve h}$};
\node at (0,0) {\textbullet};
\node [anchor=north] at (0,0) {${\ve 0}$};

\draw (2,5) -- (2,-2);
\draw (0,0) -- (2,4);
\draw (0.8944,5) -- (0.8944,-2);

\coordinate (A) at (0,0);
\draw [->] ($(A)+(0.4,0)$) arc (0:\myangle:0.4cm);

\node [anchor=east] at (-2,0) {${\mathbb S}^{d-1}\cap \P$};

\node [anchor=north] at (0.5,0.5) {$\alpha$};
\node [anchor=north] at (0.7,2.45) {${\ve v}$};
\node [anchor=north] at (1.2,0.0) {${\ve w}$};
\node [anchor=west] at (2.0,4.0) {${\ve u}$};
\node [anchor=south west] at (2.0,-2.0) {$\H\cap \P$};

\end{tikzpicture}

\caption{Geometric argument on the plane $\P$}

\label{plane_geometry_figure}

\end{figure}
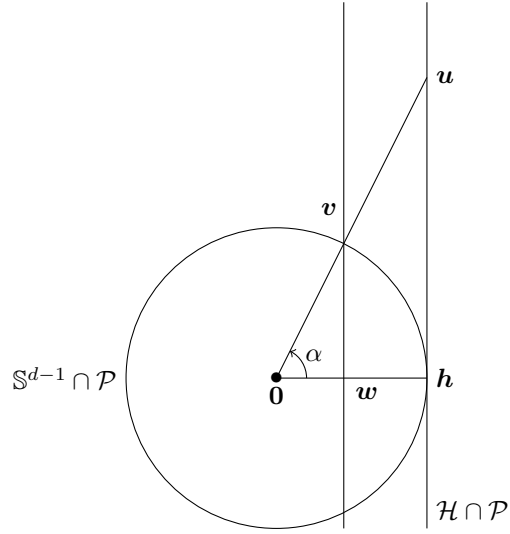

Notice that ${\ve h}$ is orthogonal to the line $\H\cap \P$. Let  ${\ve u}$ denote the intersection point of the line spanned by ${\ve v}$ and  $\H\cap \P$. Further, let ${\ve w}$ be  the orthogonal projection of ${\ve v}$ onto the line spanned by ${\ve h}$. 

Then we have
\be\label{cos}
\cos({\alpha})=\|{\ve w}\|_2=\frac{1}{\|{\ve u}\|_2}\,.
\ee
Since ${\ve h}\in \H$, all points  ${\ve x}$ on the line passing through the points ${\ve v}$ and ${\ve w}$ have $x_1+\cdots+ x_d=\sqrt{d}\, \|{\ve w}\|_2$.  Therefore, we have $\|{\ve v}\|_1=\sqrt{d}\,\|{\ve w}\|_2$. In was shown in Lemma \ref{support} that $\H$ is a supporting hyperplane of $I\Q^d$. Hence we have 
\be\label{below_hyperplane}
\vol_{d-1}(\Q^d\cap {\ve v}^\bot)\le \|{\ve u}\|_2\,.
\ee
Finally, using (\ref{cos}) and (\ref{below_hyperplane}), we have
\bea
\|{\ve v}\|_1\cdot\vol_{d-1}(\Q^d\cap {\ve v}^\bot)\le \sqrt{d}\, \|{\ve w}\|_2\|{\ve u}\|_2= \sqrt{d}\,,
\eea
that confirms (\ref{unit_ineq}).

\section{Proof of Corollary \ref{lower_sinc_ratio}}

It was observed in \cite{BF} that the sequence $\{V_d\}_{d=1}^{\infty}$ is increasing: $V_d\le V_{d+1}$ for all $d\ge 2$.
It is now sufficient to note that, by Theorem \ref{slicing_thm} (see  (\ref{central_sinc})), we can write $V_d=d \sigma_d$.

\section{Acknowledgements}

The author wishes to thank Gergely Ambrus, Imre B\'ar\'any and Martin Henk for valuable comments and suggestions.

\end{document}